\crefname{subsection}{subsection}{subsections}
\theoremstyle{definition}
\newtheorem{theorem}{Theorem}[section]
\newtheorem*{theorem*}{Theorem}
\newtheorem{lemma}[theorem]{Lemma}
\newtheorem{example}[theorem]{Example}
\newtheorem{definition}[theorem]{Definition}
\newtheorem{remark}[theorem]{Remark}
\newtheorem*{question*}{Question}
\newtheorem*{remark*}{Remark}
\DeclareMathOperator{\GL}{GL}
\DeclareMathOperator{\BesselK}{BesselK}
\DeclareMathOperator{\BesselI}{BesselI}
\newcommand{\cs}{\mathbin{\circledS}}
\newcommand{\pda}{\mathord{\downarrow}}
\DeclareMathOperator{\GCRD}{GCRD}
\DeclareMathOperator{\ord}{ord}
\DeclareMathOperator{\Minop}{MinOp}
\DeclareMathOperator{\RightFactors}{RightFactors}
\DeclareMathOperator{\Sym}{Sym}
\def\namedlabel#1#2{\begingroup
    #2%
    \def\@currentlabel{#2}%
    \phantomsection\label{#1}\endgroup
}
\definecolor{c1}{RGB}{203, 75, 48}
\title{Algorithms for 2-Solvable Difference Equations}
\newcommand\footnoteref[1]{\protected@xdef\@thefnmark{\ref{#1}}\@footnotemark}
\providecommand{\keywords}[1]
{
  \small	
  \textbf{\textit{Keywords---}} #1
}
\author{Heba Bou KaedBey}
\author{Mark van Hoeij}
\address{Department of Mathematics\\ Florida State University\\ Tallahassee, FL, USA}
\email{hb20@fsu.edu\\ hoeij@math.fsu.edu}
\keywords{difference operators, solving in terms of lower order, algorithms, absolute factorization.}
\begin{document}

\begin{abstract}
Our paper \cite{bou2024solving} gave two algorithms for solving difference equations in terms of lower order equations:
an algorithm for absolute factorization, and an algorithm for solving third order equations in terms of second order.
Here we improve the efficiency for absolute factorization, and extend the other algorithm to order four.

\end{abstract}
\maketitle
\section{Introduction}

Computer algebra systems are often used to solve differential equations from many branches of science. Linear homogeneous differential equations with rational function coefficients are very common in mathematics, combinatorics, physics and engineering. There are numerous algorithms to find closed form solutions (solutions expressible in terms of well studied special functions, for example; Bessel, Kummer, Liouvillian, Hypergeometric functions etc.) \cite{van1997factorization, debeerst2008solving, van2010finding, fang20112}. One way to interpret a closed form solution is that it is an expression that reduces a differential equation to a standard equation (the equation for that special function).
Viewed this way, it also makes sense to reduce differential equations to any equations of lower order, regardless of whether the reduced equation belongs to a special function. The papers \cite{singer1985solving, van2007solving}
show how to reduce, whenever possible, a differential equation of order three to an equation of order two, while 
\cite{van2002decomposing, hessinger2001computing,person2002solving} handle differential equations of order four. Our goal is to do this for recurrence relations, which we represent using a difference operator.

The first algorithm to compute hypergeometric solutions of recurrence relations was given by Petkovsek \cite{petkovvsek1992hypergeometric}. The first algorithm for Liouvillian solutions of recurrence relations was given by Hendriks and Singer \cite{hendriks1998algorithm, hendricks1999solving}. Both can be interpreted as writing solutions of a difference operator in terms of solutions of operators of order one. Our goal is to extend this to solving difference operators in terms of solutions of operators of order two. We call these solutions \emph{2-expressible} \cite{kaedbey2025solving}. (In this terminology, Liouvillian solutions are 1-expressible). 

One of the reasons for doing this is because of solvers for order 2 such as~\cite{Cha}. Another reason is timing. Computing 1-expressible (i.e. Liouvillian) solutions reduces to computing order-1 factors of certain auxiliary operators. To go beyond that, one needs an implementation that can find higher order factors, which now exists (RightFactors in Maple's LREtools package).

Our ISSAC paper \cite{bou2024solving} gives algorithms to find 2-expressible solutions for recurrences of order~3. There were three cases to be handled (factorization, absolute factorization, and gauge-equivalence to a symmetric square). To prove that, we needed to develop \cite{kaedbey2025solving} theory about absolute factorization and prove results using difference Galois theory.

For a 2-solvable order~4 operator $L \in D := \mathbb{C}(x)[\tau] $, there are four cases:
\begin{enumerate}
    \item $L$ is reducible in $D$;
    \item $L$ is irreducible but not absolutely irreducible;
    \item $(D/DL)\pda^{1}_{2}$ is isomorphic to a tensor product of two 2-dimensional modules;
    \item $L$ is gauge equivalent to some $L_2^{ \cs 3} \cs (\tau-r),$ where $L_2$ has order 2.
\end{enumerate}

Finding 2-expressible solutions for recurrences of order 4 is the first main goal of this paper. To accomplish this goal we need an algorithm for each case. For Case~(1) see \cite{zhou2022algorithms, barkatou2024hypergeometric,Rfactorimp}
while
Case~(2), absolute factorization, is treated in \cite{bou2024solving} with efficiency improvement in section 3. Thus, the first main goal in this paper consists of developing algorithms for Cases (3) and (4).

For Case (3), we develop an algorithm, in Section 4.2, that reduces $L$ (or its section operator) via a gauge transformation to a symmetric product of two order two operators. It computes the exterior square of $L$ and checks for order 3 factors that can be reduced to order 2 with \cite{bou2024solving}.
For Case (4) we give an algorithm in Section~4.3 to find the data mentioned in Case~(4).
There are two cases, depending on whether the symmetric square of $L$ has order 7 or order 10.

The second goal in this paper consists of
efficiency improvements for absolute factorization from \cite{bou2024solving}, and similar efficiency improvements for the
algorithms presented in this paper. To achieve this goal, we give results about determinants of difference operators and modules in Section~2. These are then used in Section 3 to improve absolute factorization by reducing the number of combinations that the factorizer needs to check, using Theorem~\ref{TheoremEfficiency} to discard unnecessary combinations.
Subsection~3.1 explains in what situations this is beneficial (in what situations the number of cases will be high), and we include an example that shows the computational time and number of cases before and after the improvement. We also give determinant formulas for the order~4 algorithms to reduce the number of cases there as well.
Implementations and experiments are available at~\cite{algo}.

\section{Difference Operators and Modules, Symmetric and Exterior Powers and Determinants}

Let $D = \mathbb{C}(x)[\tau]$ be the noncommutative polynomial ring in $\tau$ over $\mathbb{C}(x)$ with multiplication rule $\tau\circ f(x)=f(x+1)\circ \tau,$ for all $f$ in $\mathbb{C}(x)$.
An element
\begin{equation} \label{eqL}
    L=a_n\tau^n+a_{n-1}\tau^{n-1}+\dots+a_{1}\tau+a_{0}		
\end{equation}
of $D$ is called a \emph{difference operator}.
Such $L$ acts on functions by $L(f)(x)=\sum_{i} a_i\,f(x+i).$
We assume that $a_0$ and $a_n$ in~(\ref{eqL}) are non-zero. Then the {\em order} of $L$ is n, denoted by $\ord(L)$.

A difference operator is said to be \emph{2-solvable} if it has a nonzero \emph{2-expressible} solution, which means a solution that can be
expressed\footnote{\label{note1}using difference ring operations, indefinite summation, and interlacing; see \cite{kaedbey2025solving} for more details}
in terms of solutions of second order equations.
Algorithms to find $1$-expressible solutions already exist~\cite{hendricks1999solving}, such solutions are called Liouvillian solutions.

Let $S=\mathbb{C}^{\mathbb{N}}/\sim$ where $u \sim v$ when $u-v$ has finite support.
This is a difference ring under component-wise addition and multiplication. 
It is also a $D$-module; $\mathbb{C}(x)$ embeds into $S$ by evaluation at $\mathbb{N}$ because rational functions have finitely many poles.

The \emph{solution space} $V(L)$ is the set $\{u\in S\mid L(u)=0\}$.
This is a $\mathbb{C}$-vector space of dimension $\ord(L)$ by \cite[Theorem 8.2.1]{petkovvsek1997wilf}.

\begin{definition} \label{Def21} (Cyclic vector, minimal operator, irreducible modules).
Apart from $S$ we only consider $D$-modules $M$ that are finite dimensional $\mathbb{C}(x)$-vector spaces.
An element $u\in M$ is a \emph{cyclic vector} if $Du=M$. The {\em minimal operator} of $u$, denoted $\Minop(u,D)$,
is the monic generator of the left ideal $\{L\in D \mid L(u)=0\} \subset D$.

A module $M$ \emph{irreducible} if every nonzero element of $M$ is cyclic; 
equivalently, the minimal operator of every nonzero element of $M$ is irreducible of order $\dim(M)$.
\end{definition}

\begin{lemma} 
\label{lemma:gauge}
    Let $L_1,L_2\in D$ have the same order. Then the following statements are equivalent:
    \begin{enumerate} 
        \item $D/DL_1\cong D/DL_2$ as $D$-modules.
        \item There exists $G \in D$ with $G(V(L_2))=V(L_1).$
        \item There exists $G \in D$ with $L_1G \in DL_2$ and $\GCRD(G,L_2)=1$. (The same $G$ as in (2)).
        \item There exists a $D$-module with cyclic vectors $u_1,u_2$ such that $L_1=\Minop(u_1,D)$ and $L_2=\Minop(u_2,D).$  
    \end{enumerate}  
\end{lemma}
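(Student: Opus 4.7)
The plan is to establish the cycle $(1) \Rightarrow (4) \Rightarrow (3) \Rightarrow (2) \Rightarrow (1)$, using in each step that $D$ is a left Euclidean Ore domain and that the dimension of $V(L)$ equals $\ord(L)$ for nonzero $L \in D$. The step $(1) \Rightarrow (4)$ (and its converse) is almost formal: $D/DL_i$ is itself a cyclic $D$-module with cyclic vector $\bar 1$ whose minimal operator is $L_i$, so given an isomorphism $\phi\colon D/DL_1 \to D/DL_2$, I set $M = D/DL_2$, $u_2 = \bar 1$, and $u_1 = \phi(\bar 1)$; conversely, given $M$ with cyclic vectors $u_i$ having $\Minop(u_i,D) = L_i$, the surjections $D \to M$, $P \mapsto P(u_i)$ have kernel $DL_i$, yielding $D/DL_i \cong M$ and hence (1).

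For $(4) \Rightarrow (3)$, cyclicity of $u_2$ provides $G \in D$ with $u_1 = G(u_2)$. Then $L_1 G(u_2) = 0$, so $L_1 G$ lies in the annihilator $DL_2$ of $u_2$. If $H := \GCRD(G, L_2)$ had positive order, writing $G = G'H$ and $L_2 = L_2'H$ would make $H(u_2)$ annihilated by $L_2'$; the submodule $D\cdot H(u_2)$ would then have $\mathbb{C}(x)$-dimension at most $\ord(L_2') < \dim M$, yet it contains $u_1 = G'(H(u_2))$, contradicting cyclicity of $u_1$. For $(3) \Rightarrow (2)$, the inclusion $G(V(L_2)) \subseteq V(L_1)$ is immediate from $L_1 G \in DL_2$. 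Since $\GCRD(G,L_2) = 1$, Bezout on the left Euclidean domain $D$ gives $U, V \in D$ with $UG + V L_2 = 1$; applying this to $u \in V(L_2)$ with $G(u) = 0$ forces $u = 0$, so $G \colon V(L_2) \to V(L_1)$ is injective and hence bijective by equality of dimensions.

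For $(2) \Rightarrow (1)$, I first recover $L_1 G \in DL_2$: since $L_1 G$ annihilates the $\ord(L_2)$-dimensional space $V(L_2)$, its right-division remainder modulo $L_2$ has order smaller than $\ord(L_2)$ but vanishes on an $\ord(L_2)$-dimensional space, so it must be zero. Then $\phi\colon D/DL_1 \to D/DL_2$ defined by $\bar P \mapsto \overline{PG}$ is a well-defined $D$-homomorphism, and injectivity follows because $PG \in DL_2$ forces $P(V(L_1)) = P(G(V(L_2))) = 0$, whence $L_1$ right-divides $P$ by the same dimension argument; equality of $\mathbb{C}(x)$-dimensions then upgrades this to an isomorphism. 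The main subtlety throughout is purely bookkeeping: one must respect the noncommutative structure (minimal operators as right divisors, $\GCRD$ Bezout with left coefficients, annihilators as left ideals), after which each implication reduces to a short dimension count on cyclic modules or on solution spaces in $S$.
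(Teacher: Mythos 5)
Your proof is correct. Note that the paper itself states Lemma~\ref{lemma:gauge} without proof, treating it as a standard fact about gauge equivalence (it appears in the background literature on difference modules, e.g.\ van der Put--Singer and the authors' earlier papers), so there is no in-paper argument to compare against. Your cycle $(1)\Rightarrow(4)\Rightarrow(3)\Rightarrow(2)\Rightarrow(1)$ is a sound, self-contained verification and is essentially the standard argument: the identifications $D/D\Minop(u,D)\cong Du$ handle the equivalence of (1) and (4); the dimension count on the submodule $D\cdot H(u_2)$ correctly rules out a nontrivial $\GCRD$ in $(4)\Rightarrow(3)$; the Bezout identity $UG+VL_2=1$ from the extended right Euclidean algorithm gives injectivity of $G$ on $V(L_2)$, which combined with $\ord(L_1)=\ord(L_2)$ and $\dim_{\mathbb C}V(L_i)=\ord(L_i)$ yields $(3)\Rightarrow(2)$; and the recovery of $L_1G\in DL_2$ from the vanishing of the right-division remainder on an $\ord(L_2)$-dimensional solution space correctly drives $(2)\Rightarrow(1)$. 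The only point you leave implicit is that the remainder $R$ need not have nonzero trailing coefficient, so strictly one should note that $\tau$ acts bijectively on $S$ (the quotient by finite support makes the shift invertible), whence $\dim V(R)\le\deg_\tau R$ for any nonzero $R$; this is a routine technicality and does not affect the argument.
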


\begin{definition}  \label{def:gauge} If any statement in Lemma \ref{lemma:gauge} holds, we say that $L_1$ is \emph{gauge equivalent} to $L_2$,
    and $G$ is a \emph{gauge transformation} from $L_{2}$ to $L_{1}$.
    Moreover,
    $G: V(L_2) \rightarrow V(L_1)$ is a bijection.  \end{definition}

\begin{definition} \label{DefProj}
Operators $L, L' \in D$ are \emph{projectively equivalent} if there exists $L_1 \in D$ of order 1 for which $L_1 \circledS L$ is gauge equivalent to $L'$.
A \emph{projective transformation} is a pair $(L_1,G),$ where $G$ is a gauge transformation from $L_1 \circledS L$ to $L'$. This expresses solutions of $L'$ in terms of solutions of $L_1$ and $L$. In particular, if $L$ is 2-solvable, so is $L'$.
\end{definition}

\begin{definition} (Matrix for $M$).
	If $M$ is a $D$-module then we say that \emph{$A$ is a matrix for $M$}
	if it {\em expresses the action} of $\tau$ on a basis of $M$.
	If $b_1,\ldots,b_n$ is that basis, then \[
		\left(\begin{matrix} \tau(b_1) \\ \vdots \\ \tau(b_n) \end{matrix}\right)
		= A \left(\begin{matrix} b_1 \\ \vdots \\ b_n \end{matrix}\right).
	\]
\end{definition}

\begin{remark} \label{ChBas} (Change of basis).
If $A$ is a matrix for $M$ then so is $\tau(P) A P^{-1}$ for any $P \in \GL_n(\mathbb{C}(x))$.
\end{remark}

\begin{definition} (Determinant).
	If $M$ is a $D$-module of dimension $n$, then $\det(M)$ is the 1-dimensional $D$-module $\bigwedge^n M$.
\end{definition}

We typically represent $\det(M)$ with a rational function as follows.

\begin{definition} \label{DefSim}
	If $M_1$ is a 1-dimensional $D$-module and $r\in \mathbb{C}(x)-\{0\}$ then we write $M_1 \sim r$ when
	$M_1 \cong D/D(\tau-r)$, i.e., when $(r)$ is a $1 \times 1$ matrix for $M_1$.
	If $r_1, r_2 \in \mathbb{C}(x)-\{0\}$
	then we write $r_1 \sim r_2$ when $D/D(\tau - r_1) \cong D/D(\tau - r_2)$ which by Remark~\ref{ChBas} is equivalent to
	$r_1/r_2 = \tau(P)/P$ for some $P \in \mathbb{C}(x)-\{0\}$.
\end{definition}

\begin{definition} \label{DefCompanion} (Companion matrix and determinant of $L$). The  \emph{companion matrix}
of $L$ from equation~(\ref{eqL}) is:
$$C_L := \begin{pmatrix}
   0      & 1      & 0      & \cdots & 0 \\
   0      & 0      & 1      & \cdots & 0 \\
   \vdots & \vdots & \vdots & \smash{\ddots} & \vdots \\
   0      & 0      & 0      & \smash{\ddots} & 1 \\
   -a'_0    & -a'_1    & -a'_2    & \cdots & -a'_{n-1} 
  \end{pmatrix}$$
where $a'_i = a_i / a_n$ (recall that $a_0$ and $a_n$ are non-zero).
It expresses the action of $\tau$ on the basis $1,\ldots,\tau^{n-1}$ of $D/DL$.
Because of Lemma~\ref{lemmaDet} below we define
the \emph{determinant} of $L$ as
$\det(L) := \det(C_L) = (-1)^n {a'_0}
= (-1)^n a_0/a_n$.
\end{definition}

\subsection{Symmetric and Exterior powers} \label{Section21}

Let $k$ be a field. Let $V$ be a $k$-vector space with basis $B := \{b_1,\ldots,b_n\}$.
By viewing $b_1,\ldots, b_n$ as variables, we obtain a polynomial ring $k[b_1,\ldots, b_n]$.
We identify $\Sym^d(V)$ with the set of homogeneous polynomials of degree $d$.
Let $B_d := \{b_1^d, b_1^{d-1} b_2, \ldots, b_n^d\}$ be a basis.

If $A$ is the matrix w.r.t. $B$ of a linear map $\phi: V \rightarrow V$,
then the {\em symmetric power matrix}\, $\Sym^d(A)$ is the matrix w.r.t. $B_d$ of the induced
linear map $\Sym^d(V) \rightarrow \Sym^d(V)$. For elementary matrices it is easy to check that
\begin{equation} \label{eqDetSym}
	\det(\Sym^d(A)) = \det(A)^m \ \ \ {\rm where} \ \ \ m = \binom{n+d-1}{n}. 
\end{equation}
Then~(\ref{eqDetSym}) holds in general because both $\det$ and $\Sym^d$ preserve products.
Likewise, $\phi$ induces a linear map $\bigwedge^d(V) \rightarrow \bigwedge^d(V)$. We denote its matrix w.r.t. basis
$\{ b_{i_1} \wedge \cdots \wedge b_{i_d} \ | \ i_1 < \cdots < i_d \}$ as $\bigwedge^d(A)$.
\begin{equation} \label{eqDetExt}
	\det(\bigwedge\nolimits^d(A)) = \det(A)^k \ \ \ {\rm where} \ \ \ k = \binom{n-1}{d-1}.
\end{equation}

If $M$ and $N$ are $D$-modules, then
the \emph{tensor product} $M\otimes N$ is a $D$-module under $\tau(m \otimes n) := \tau(m) \otimes \tau(n)$.
The $D$-module structure of symmetric and exterior powers of $M$ is defined in a similar way.

\begin{lemma} \label{lemmaDet} Let $M$ be an $n$-dimensional $D$-module and let $A$ be a matrix for $M$. Then
$\det(M) \sim \det(A)$,\,  $\det(\Sym^d(M)) \sim \det(A)^m$, and  $\det(\bigwedge^d(M)) \sim \det(A)^k$
with $m,k$ as in equations~(\ref{eqDetSym}) and~(\ref{eqDetExt}).
If $M \cong D/DL$ then $\det(M) \sim \det(C_L) = \det(L)$.
\end{lemma}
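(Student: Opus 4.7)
The plan is to reduce the whole statement to one linear-algebra calculation: that the top exterior power of a matrix acts as multiplication by its determinant. Parts (2), (3), and (4) follow by applying part (1) to the modules $\Sym^d(M)$, $\bigwedge^d(M)$, and $D/DL$ respectively, so the real content is the first claim $\det(M)\sim\det(A)$.

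For part (1), I would pick the basis $b_1,\ldots,b_n$ of $M$ in which $\tau$ has matrix $A$; then $\bigwedge^n M$ is $1$-dimensional with basis $b_1\wedge\cdots\wedge b_n$. Using the $D$-module structure on the exterior power from the paragraph preceding the lemma, expand
$$\tau(b_1\wedge\cdots\wedge b_n)=\tau(b_1)\wedge\cdots\wedge\tau(b_n)=\det(A)\cdot b_1\wedge\cdots\wedge b_n,$$
which shows that $(\det(A))$ is a $1\times 1$ matrix for $\det(M)$, hence $\det(M)\sim\det(A)$ by Definition~\ref{DefSim}. For part (2) I would take the basis $B_d$ of $\Sym^d(M)$; by the definition of $\Sym^d(A)$ this is a matrix for the $D$-module $\Sym^d(M)$, so applying part (1) to $\Sym^d(M)$ together with~(\ref{eqDetSym}) gives $\det(\Sym^d(M))\sim\det(\Sym^d(A))=\det(A)^m$. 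Part (3) is analogous, using~(\ref{eqDetExt}). For part (4), the basis $1,\tau,\ldots,\tau^{n-1}$ of $D/DL$ has matrix $C_L$ by Definition~\ref{DefCompanion}, so part (1) yields $\det(D/DL)\sim\det(C_L)=\det(L)$.

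I do not anticipate a serious obstacle. The one sanity check is that the $\sim$-class on the right-hand side does not depend on which matrix $A$ was chosen: if a change of basis by $P\in\GL_n(\mathbb{C}(x))$ replaces $A$ by $A'=\tau(P)AP^{-1}$ (Remark~\ref{ChBas}), then $\det(A')=\tau(\det P)\det(A)/\det(P)$, which is $\sim\det(A)$ in the sense of Definition~\ref{DefSim}. The other small point to confirm is that the $D$-module action $\tau(m\otimes n)=\tau(m)\otimes\tau(n)$, restricted to $\Sym^d(M)$ and $\bigwedge^d(M)$, really is represented by $\Sym^d(A)$ and $\bigwedge^d(A)$ in the stated bases, but this is exactly the content of the definitions recalled in Subsection~\ref{Section21}.
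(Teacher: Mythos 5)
Your proposal is correct and follows essentially the same route as the paper: compute $\tau(b_1\wedge\cdots\wedge b_n)=\det(A)\,b_1\wedge\cdots\wedge b_n$ to get $\det(M)\sim\det(A)$, then apply this to $\Sym^d(M)$, $\bigwedge^d(M)$, and $D/DL$ via their matrices $\Sym^d(A)$, $\bigwedge^d(A)$, and $C_L$ together with~(\ref{eqDetSym}) and~(\ref{eqDetExt}). The basis-independence check at the end is a sensible addition but not needed for the argument.
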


\begin{proof}
Let $b_1,\ldots,b_n$ be a basis of $M$ and $b := b_1 \wedge \ldots \wedge b_n$ a basis of $\det(M)$. Now $\tau(b) = \tau(b_1) \wedge \ldots \wedge \tau(b_n) = \det(A)b$,\,
so $\det(M) \sim \det(A)$.
The formulas for $\Sym^d(M)$, $\bigwedge^d(M)$ and $D/DL$ hold because $\Sym^d(A)$, $\bigwedge^d(A)$ and $C_L$ are matrices for these modules.
\end{proof}

\subsection{Symmetric and Exterior powers of operators} \label{Section22}

If $L \in D$ has order $n$ then $b_1,\ldots,b_n := \tau^0,\ldots,\tau^{n-1}$ is a basis of $M := D/DL$.
Let $L^{\cs d}$ be the minimal operator of $b_1^d \in \Sym^d(M)$.
If $b_1^d$ is a {cyclic vector} (if it generates $\Sym^d(M)$)
then we say the order of  $L^{\cs d}$ is {\em as expected}, in which case
\[ \ord( L^{\cs d} ) = \dim (\Sym^d(M)) = \binom{n+d-1}{d} \]
and
\[ \det(  L^{\cs d} )  \sim  \det( L)^{\binom{n+d-1}{n} } \] 
by Lemma~\ref{lemmaDet}.
If $b_1^d$ is not a cyclic vector, then we say that the order of $L^{\cs d}$ is {\em lower than expected}.

Likewise,  let $\bigwedge^d(L)$ be the minimal operator of $b_1 \wedge \ldots \wedge b_d \in \bigwedge^d(M)$. If this is a cyclic vector then we say that the order is as expected, in which case
\[ \ord( \bigwedge\nolimits^d(L) ) = \dim ( \bigwedge\nolimits^d(M) ) = \binom{n}{d} \]
and
\[ \det(  \bigwedge\nolimits^d(L) )  \sim  \det( L)^{ \binom{n-1}{d-1}}. \]

\begin{definition}
\label{def:symm-prod} Let $L,L' \in D$.
The \emph{symmetric product} $L \cs L'$ is the minimal operator of $1\otimes 1 \in (D/DL)\otimes(D/DL')$. 
\end{definition}

Note that $L^{\cs 2}  =  L\cs L$. If $L_1$ has order 1 and $\lambda$ is a non-zero solution of $L_1$ then
$V(L_1 \circledS L) = \lambda \cdot V(L)$.

\begin{lemma} \cite[Zehfuss determinant]{zehfuss1858ueber}
\label{det_tensor}
Let $M_1, M_2$ be $n$ and $m$-dimensional $D$-modules respectively and let $A, B$ be the matrices for $M_1, M_2$ respectively. Then, $$\det(A \otimes B) = \det(A)^m\det(B)^n.$$     
\end{lemma}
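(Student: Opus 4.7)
The plan is to reduce the claim to a purely matrix-level identity about the Kronecker product, since the conclusion $\det(A \otimes B) = \det(A)^m \det(B)^n$ only involves the ordinary determinant of the matrix $A \otimes B$ and does not refer to the $D$-module structure on $M_1 \otimes M_2$. So the lemma is really the classical Zehfuss/Kronecker identity, and the $D$-module setup is only present to certify that $A \otimes B$ is indeed a matrix for $M_1 \otimes M_2$.

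The key step is the factorization
\[
A \otimes B \;=\; (A \otimes I_m)\,(I_n \otimes B),
\]
which follows from the bilinearity identity $(XY) \otimes (ZW) = (X \otimes Z)(Y \otimes W)$ applied with $X = A$, $Y = I_n$, $Z = I_m$, $W = B$. Since the ordinary determinant is multiplicative, it then suffices to prove
\[
\det(A \otimes I_m) = \det(A)^m, \qquad \det(I_n \otimes B) = \det(B)^n.
\]

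For these two reductions I would choose, in each case, a tensor-basis ordering that makes the matrix block-diagonal. Ordering the basis $\{b_i \otimes c_j\}$ by $i$ first and then $j$ makes $I_n \otimes B$ block-diagonal with $n$ copies of $B$ along the diagonal, so $\det(I_n \otimes B) = \det(B)^n$. Reversing the ordering (group by $j$ first), which amounts to conjugating by the permutation matrix that swaps the two tensor factors, rewrites $A \otimes I_m$ as block-diagonal with $m$ copies of $A$; since conjugation by a permutation matrix does not change the determinant, $\det(A \otimes I_m) = \det(A)^m$. Multiplying the two identities yields the claim.

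The main obstacle is purely bookkeeping: identifying the two different orderings of the tensor basis that reveal the block-diagonal structure. A slicker alternative that avoids this is to note that both sides of the target identity are multiplicative in each of $A$ and $B$ separately, and then verify the identity on upper-triangular (in particular diagonal) matrices, where $\det(A \otimes B)$ is the product of all $nm$ eigenvalue products $\lambda_i \mu_j$, which regroups as $(\prod_i \lambda_i)^m (\prod_j \mu_j)^n = \det(A)^m \det(B)^n$.
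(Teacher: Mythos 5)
The paper gives no proof of this lemma; it is stated with a citation to Zehfuss's 1858 paper, so there is nothing internal to compare against. Your argument is the standard, correct proof of the classical Kronecker-product determinant identity: the factorization $A \otimes B = (A \otimes I_m)(I_n \otimes B)$ via the mixed-product property, combined with the block-diagonal computations $\det(I_n \otimes B) = \det(B)^n$ and $\det(A \otimes I_m) = \det(A)^m$ (the latter after conjugating by the factor-swapping permutation), is exactly how the cited result is usually established, and your observation that the $D$-module structure plays no role in the determinant identity itself is correct --- one only needs that the Kronecker product $A\otimes B$ is the matrix expressing the action of $\tau$ on the product basis, which follows from the paper's definition $\tau(m\otimes n)=\tau(m)\otimes\tau(n)$.
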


\section{Absolute Factorization} \label{SectionAbsFactor}
\begin{definition} (Restriction of a $D$-module  {\cite[Definition 5.1]{kaedbey2025solving}}).
    Let $D_{p}=\mathbb{C}(x)[\tau^p] \subseteq D$, where $p \in \mathbb{Z}^{+}$.
    Let $M$ be a $D$-module.
    Then $M\pda^{1}_{p}$ is $M$ viewed as a $D_{p}$-module.
\end{definition}

\begin{definition}
\label{def:section-operator} (Section operator).
If $L \in D$ then let $L\pda^{1}_{p}$ be the minimal operator of $1  \in M\pda^1_p$ where $M := D/DL$.
Equivalently, $L\pda^{1}_{p}$ is the monic element in $D_{p}\cap DL$ of minimal order.
The $p$'th \emph{section operator} is $L^{(p)}\coloneqq \psi_{p}^{-1}(L\pda^{1}_{p}) \in D$.
Here $\psi_{p}$ is the following isomorphism
    \begin{equation}\begin{split} \label{isom-D-Dm}
 \psi_{p} :D &\longrightarrow D_p \\
  \tau &\mapsto \tau^p \\
  x &\mapsto \frac{x}{p}.\end{split}\end{equation} 
\end{definition}

These $L\pda^{1}_{p}$ and $L^{(p)}$ are the same as $P(\phi^p)$ and $P_{0}(\phi)$ from~\cite[Lemma 5.3]{hendricks1999solving}.
A submodule of $M\pda^{1}_{p}$ is a subset of $M$ that is also a $D_p$-module.
Every $D$-module is a $D_p$-module, but not vice versa, so $M\pda^{1}_{p}$ could have more submodules than $M$. Because of this, although the image of $1$ in $M := D/DL$ is always a cyclic vector of $M$,
it need not be a cyclic vector of $M\pda^{1}_{p}$ because it could generate
a proper submodule. In that case $L\pda^{1}_{p}$
and $L^{(p)}$ have {\em lower order than expected} (in which case the algorithm below stops in Step~1(b)).

\begin{definition} (Absolute irreducibility). A $D$-module $M$ is \emph{absolutely irreducible} if $M\pda^{1}_{p}$ is irreducible for every $p \in \mathbb{Z}^{+}$.
We say that $L$ is absolutely irreducible when $D/DL$ is.
\end{definition}

\noindent \textbf{Algorithm:} \texttt{AbsFactorization} from  \cite{bou2024solving}. \\
\textbf{Input:} An irreducible operator $L\in D$.\\
\textbf{Output:} \texttt{Absolutely\,Irreducible} if $L$ is absolutely irreducible; otherwise $[p,\{R_i\}]$. Here $p$ is a prime
and each $R_i$ right-divides $L^{(p)}$, with $\psi_{p}(R_i)$ generating a nontrivial submodule of $(D/DL) \pda^{1}_{p}$.
\begin{enumerate}
    \item For each prime factor $p$ of $n := \ord(L)$\ \begin{enumerate}
        \item Compute $L^{(p)}$ from Definition \ref{def:section-operator}.
        \item If $\ord(L^{(p)}) < \ord(L)$ then return $[p,\{1\}]$ and stop. \label{1b}
        \item Let $S\coloneqq \RightFactors\left(L^{(p)}, n/p \right)$. \label{1c}
        \item \label{1d} If $S \neq \varnothing$, then return $[p,S]$ and stop.
    \end{enumerate}
    \item Return \texttt{Absolutely\,Irreducible}.
\end{enumerate}

The main step is to factor $L^{(p)}$ in $D$.
Section~\ref{Section31} below discusses factoring and gives an example to illustrate an efficiency issue, which will be addressed in Section~\ref{SectionImproved}.

\subsection{Factorization in $D$} \label{Section31}
Manuel Bronstein showed that finding $d$'th order right-factors of $L$ can be done by computing
hypergeometric solutions of a system  $\tau(Y) = AY$ where $A = \bigwedge^d(C_L)$. He also made significant progress towards solving such systems,
see \cite{barkatou2024hypergeometric} for more details.

{If} we know the so-called {\em types} of hypergeometric solutions for a system $\tau(Y) = AY$, then finding hypergeometric solutions reduces to computing {\em polynomial solutions} of related systems $\tau(P) = \lambda_i^{-1} A P$, see \cite[Sections~4 and~5]{barkatou2024hypergeometric} for details.
Thus, a key goal in \cite{barkatou2024hypergeometric} is to construct a set of {\em candidate types} (denoted $\mathcal{H}_2 \subset \mathbb{C}(x)$ in \cite{barkatou2024hypergeometric}). This set should be as small as possible, because for each $\lambda_i \in \mathcal{H}_2$
we have to compute\footnote{The CPU time depends on the degree bound for $P$, computed for each $\lambda_i$ in (\cite[Section~5]{barkatou2024hypergeometric} and \cite[Section 4.5]{zhou2022algorithms}). However, we have no reasonable a priori degree bound.} the polynomial solutions of $\tau(P) = \lambda_i^{-1} A P$. 

We will call $S$ a set of {\em candidate determinants} for $L$ if $\det(R)$ provably appears in $S$ up to $\sim$
for every order-$d$ right-factor $R \in D$ of $L$.
Our goal is to minimize $S$ because it equals the set of candidate types for $\tau(Y) = AY$ when $A = \bigwedge^d(C_L)$.

\begin{example} \label{ExampleA}
The cost of Step~1(c) in algorithm \texttt{AbsFactorization} depends
on the number of candidates in the factorization, which in turn depends on the number of
factors of the leading and trailing coefficients.
To illustrate this we constructed an example of a fourth order difference operator where $L^{(2)}$ is reducible and has many small factors in the leading and trailing coefficients: \\
$L = a_4\tau^4 + a_3\tau^3 + \cdots +a_0\tau^0,$ where {\tiny \sloppy $a_4=(x+4)^{2}(x+5)^{2}(2x+7)^{2}(2x+9)^{2}(4x-11)(524160x^{8}+9391200x^{7}-118179432x^{6}-253541284x^{5}-339259113x^{4}-283416626x^{3}-140532705x^{2}-35130024x-2220048),$ $a_3=16(x+4)^{2}(2x+7)^{2}(524160x^{12}+15113280x^{11}-364158816x^{10}-4278491572x^{9}-9186978746x^{8}+12166953346x^{7}-86741410290x^{6}-843333775440x^{5}-2144077451746x^{4}-3001904754612x^{3}-2506144851117x^{2}-1178353117620x-242095406175),$ $a_2= (-137438945280x^{17}-6482503372800x^{16}-90355220358144x^{15}-154953056569984x^{14}+8139627355615616x^{13}+69179680108818000x^{12}+277321791062784832x^{11}+698868352509149328x^{10}+1236863662787672992x^{9}+1625448731323698944x^{8}+1626145247262854144x^{7}+1235819925815197696x^{6}\\+686291085150978048x^{5}+244593652122419200x^{4}+24045290042818560x^{3}-27607241721839616x^{2}-15602879836717056x-2930851407200256),$ $a_1=32768(x+2)^{2}(2x+3)^{2}  (1048320x^{12}+38613120x^{11}-475672512x^{10}-11499544808x^{9}-68147233556x^{8}-184773020492x^{7}-262836346620x^{6}-216526023556x^{5}\\
-122659285853x^{4}-39783078178x^{3}+1029344695x^{2}+7429526904x+2484513522),$ and $a_0=4096(x+1)^{2}(x+2)^{2}(2x+3)^{2}(2x+1)^{2}(4x+33)(524160x^{8}+13584480x^{7}-37764552x^{6}-736049716x^{5}-3014273813x^{4}-6181409598x^{3}-7122549901x^{2}-4430333096x-1162363872)$.}

First we check if $L$ is reducible by computing
factors of orders $d=1,2,3$.
Setting \verb +infolevel[LREtools]:=10+ causes \texttt{RightFactors} (\cite{Rfactorimp} and \cite[Section 7]{barkatou2024hypergeometric}) to display every candidate it checks, plus some other information.
For $d=2$ it checks 30 candidates in order to find all hypergeometric solutions of the 6 by 6 system $\bigwedge^d(C_L)$.
This means it computes polynomial solutions of 30 systems. Each of these is converted to a system over~$\mathbb{Q}$
using degree bounds computed from generalized exponents \cite[Chapter 6]{zhou2022algorithms}.
After that, the number of unknowns in $\mathbb{Q}$ of these systems ranges from 13 to 43, and the number of equations ranges from 74 to 117.
Most of the 30 systems have no solutions, and the remaining ones have solutions that do not correspond to factors.
All of this takes only a fraction of a second and we find that $L$ is irreducible in $D$.

Factoring $L^{(2)}$ in Step 1(c) takes much longer, primarily because the
number of candidate types is much higher for $L^{(2)}$ than it is for $L$ (the order is still 4 but the degree in $x$ is much higher). This time \texttt{RightFactors} reports 1791 candidate combinations (see Remark~\ref{RemCombination} below). Degree bounds and the resulting systems over $\mathbb{Q}$ are larger too.
\end{example}

\begin{remark} \label{RemCombination} One technical point in the implementation \texttt{RightFactors}  is that it constructs
more than the candidate types explained in~\cite{zhou2022algorithms, barkatou2024hypergeometric}.
It constructs {\em candidate combinations} for the factors $R$ to be computed.
A combination for $R$ contains not only the type (i.e.  $\det(R)$ up to $\sim$) but also the Newton polygon and polynomials of $R$.
(For a definition see \cite{ChaThesis}).
Due to a relation between these ingredients, the number of candidate combinations could be less than the number of candidate types,
but it could also be greater, because one candidate type could match more than one Newton polygon plus polynomials.
But the general situation remains that the CPU time for {RightFactors}
depends greatly on the number of candidates. The purpose of Theorem~\ref{TheoremEfficiency} below is to discard candidates
when possible.
\end{remark}

\subsection{Efficiency Improvement of Absolute Factorization} \label{SectionImproved}

\begin{theorem} \label{TheoremEfficiency}
Let $L \in D$ be irreducible of order $n$, and let $p$ be a prime dividing $n$.
Suppose that $L^{(p)}$ has order $n$ as well.
Then $\psi_p( \det(R)) \sim \det(L)$ for any factor $R$ of $L^{(p)}$ of order $n/p$.
\end{theorem}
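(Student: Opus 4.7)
The plan is to compute $\det(M)$ for $M := D/DL$ in two different ways: once directly (yielding $\det L$) and once through a decomposition of $M$ induced by the factor $R$ (yielding $\psi_p(\det R)$), and then match the two outputs.

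First I would translate $R$ into a $D_p$-submodule of $M\pda^{1}_{p}$. The assumption $\ord L^{(p)} = n$ ensures that $1 \in M$ is a $D_p$-cyclic vector of $M\pda^{1}_{p}$, so $M\pda^{1}_{p} \cong D_p/D_p L\pda^{1}_{p}$ as $D_p$-modules. The factorization $L^{(p)} = QR$ in $D$ gives, via $\psi_p$, a factorization $L\pda^{1}_{p} = \psi_p(Q)\,\psi_p(R)$ in $D_p$. The right factor $\psi_p(R)$ carves out a $D_p$-submodule $K \subset M\pda^{1}_{p}$ with quotient $N \cong D_p/D_p\psi_p(R)$ of $\mathbb{C}(x)$-dimension $n/p$, and the companion-matrix computation of Lemma~\ref{lemmaDet} (applied in $D_p$) gives $\det(N) \sim_p \psi_p(\det R)$.

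Next I would exploit irreducibility of $L$ to decompose $M$ internally. Because $\tau$ commutes with $\tau^p$, each conjugate $K_j := \tau^j K$ is a $D_p$-submodule of $M\pda^{1}_{p}$, with $\tau^p K = K$. Irreducibility of $M$ as a $D$-module rules out $\tau K = K$ (which would make $K$ a proper $D$-submodule), so the orbit $K, \tau K, \ldots, \tau^{p-1} K$ has exactly $p$ distinct elements, and $\bigcap_j K_j = 0$ (the intersection is $\tau$-stable, hence a $D$-submodule). Dually, the subspaces $M_j := \bigcap_{l \ne j} K_l$ give an internal direct sum $M = M_0 \oplus \cdots \oplus M_{p-1}$ of $D_p$-submodules, each of dimension $n/p$, with $\tau$ cyclically permuting them; the natural composition $M_j \hookrightarrow M\pda^{1}_{p} \twoheadrightarrow N_j := M\pda^{1}_{p}/K_j$ is a $D_p$-module isomorphism, and in particular $M_0 \cong N$, so $\det(M_0) \sim_p \psi_p(\det R)$.

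With this decomposition in hand, I would compute $\det(M) = \bigwedge^n M$. Pick a basis $e_1, \ldots, e_{n/p}$ of $M_0$ and set $\omega_j := \tau^j(e_1 \wedge \cdots \wedge e_{n/p}) \in \det(M_j)$; then $\omega := \omega_0 \wedge \cdots \wedge \omega_{p-1}$ generates $\det(M)$. Computing
\[
\tau(\omega) \;=\; \omega_1 \wedge \cdots \wedge \omega_{p-1} \wedge \tau^p(\omega_0) \;=\; (-1)^{(n/p)(p-1)}\, \det(T)\, \omega,
\]
where $T$ is the matrix of $\tau^p|_{M_0}$ in the basis $(e_i)$ and $\tau^p(\omega_0) = \det(T)\omega_0$, with the sign coming from reordering the $p$ block wedges of size $n/p$. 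Since $\det(T) \sim_p \det(M_0) \sim_p \psi_p(\det R)$ and any $\sim_p$ equivalence is a $\sim$ equivalence (one verifies $\tau^p(Q)/Q = \tau(\prod_j \tau^j Q)/\prod_j \tau^j Q$), combining with $\det(M) \sim \det(L)$ from Lemma~\ref{lemmaDet} yields
\[ \det(L) \;\sim\; (-1)^{(n/p)(p-1)}\, \psi_p(\det R). \]
For any odd prime $p$, the exponent $(n/p)(p-1)$ is even and the sign vanishes, proving the theorem in that case.

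The step I expect to be the main obstacle is the parity analysis in the remaining case $p = 2$: the sign is then $(-1)^{n/2}$, and verifying that it does not obstruct the claim — either by a finer basis choice, or by showing the sign can be absorbed into the ambiguity in choosing $R$ among the right factors of $L\pda^{1}_{p}$ — requires additional structural input beyond what the block-decomposition alone supplies.
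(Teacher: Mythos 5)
Your argument is correct and takes a genuinely different route from the paper. The paper first invokes \cite[Theorem 4.1]{bou2024solving} to replace $L$ by a gauge-equivalent $\tilde L\in D_p$, and then reads off the decomposition $(D/D\tilde L)\pda^{1}_{p}=\bigoplus_i M_i$ with $M_i\cong D_p/D_p(\tilde L|_{x\mapsto x+i})$ directly from the basis $\tau^0,\dots,\tau^{n-1}$. You instead build the analogous decomposition $M=M_0\oplus\cdots\oplus M_{p-1}$ inside $M:=D/DL$ starting from the given factor $R$, using only irreducibility of $M$ to control the conjugates $\tau^jK$ of the submodule $K$ cut out by $\psi_p(R)$; this is self-contained (no descent theorem needed), and because you then compute $\det(M)=\bigwedge^nM$ honestly from the block decomposition, you keep track of a sign that the paper's chain of equivalences does not.

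That sign is not an obstacle you failed to overcome; it is genuinely there, and the statement as printed fails already for $n=p=2$. Take $\tilde L=\tau^2-x$ (irreducible) and $L:=\Minop(1+\tau,D)=\tau^2+\tfrac{1}{x-1}\tau-\tfrac{x^2}{x-1}$, which is gauge equivalent to $\tilde L$. Then $L^{(2)}=\tau^2-(4x+3)\tau+2x(2x+1)=(\tau-(2x+1))(\tau-2x)$ has order $2$, and for the factor $R=\tau-2x$ one gets $\psi_2(\det R)=x$ while $\det(L)=-x^2/(x-1)\sim-x$; since $-1$ is not of the form $\tau(P)/P$, these are not $\sim$-equivalent. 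The paper's proof loses exactly your factor $(-1)^{(n/p)(p-1)}$ in the step $\psi_p(\det R)\sim\det(\tilde L_i)\sim\det(\tilde L)\sim\det(L)$: there $\det(\tilde L)$ is read once as the determinant of an order-$(n/p)$ operator in $\tau^p$ (to match $\psi_p(\det L_i)$) and once as the determinant of an order-$n$ operator in $\tau$ (to match $\det(L)$ via Lemma~\ref{lemmaDet}), and with the convention $\det=(-1)^{\ord}a_0/a_{\ord}$ these two readings differ by $(-1)^{n-n/p}=(-1)^{(n/p)(p-1)}$. So the theorem should carry this sign; it equals $1$ in every case the paper actually uses ($n=3,p=3$ and $n=4,p=2$), which is why the algorithms are unaffected, but your derivation is right and your refusal to wave the sign away in the case $p=2$, $n/2$ odd is sound.
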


By passing this information to RightFactors in Step 1(c) on algorithm \texttt{AbsFactorization},
the number of candidate combinations in Example~\ref{ExampleA} reduces from 1791 to 363,
resulting in a significant improvement in running time, from 24.7 to 6.9 seconds.
The reason the number of cases is not $1$ is because $\psi_p(\lambda_1) \sim \psi_p(\lambda_2)$ does not imply $\lambda_1 \sim \lambda_2$.

Our algorithm is also an efficient way to compute Liouvillian solutions, which corresponds to order~1 factors of $L^{(n)}$. \\

\noindent {\bf Proof of Theorem~\ref{TheoremEfficiency}:}
When $L$ is irreducible and $L^{(p)}$ is not, we have $D/DL \cong D/D\tilde{L}$ for some $\tilde{L} \in D_p$, see \cite[Theorem 4.1]{bou2024solving}.
Now $D/D\tilde{L}$ is an irreducible $D$-module with basis $B = \{\tau^0,\ldots,\tau^{n-1}\}$. Let $M_i := {\rm SPAN}_{\mathbb{C}(x)}(B_i) \subset D/D\tilde{L}$
where $B_i := \{ \tau^j \in B \ | \ j \equiv i \mod p\}$. 
This $M_i$ is a $D_p$-module because $\tilde{L}$ is in $D_p$. Now 
$\tilde{L}_i := \tilde{L}\vert_{x \mapsto x+i} = \tau^i \tilde{L} \tau^{-i} \in D_p$ annihilates $\tau^i \in M_i \subset D/D\tilde{L}$ and hence $M_i \cong D_p / D_p \tilde{L}_i$. {($\tau^i \tilde{L} \tau^{-i}$ is computed in the larger ring $\mathbb{C}(x)[\tau,\tau^{-1}] \supseteq D$)}.
We find
\[ (D/DL)\pda^{1}_{p} \ \cong \ (D/D\tilde{L})\pda^{1}_{p}  \ = \ \bigoplus_{i=0}^{p-1} M_i  \ \cong \  \bigoplus_{i=0}^{p-1} \left(D_p/D_p\tilde{L}_i\right). \]

If $N$ is a $D_p$-module then we can use the isomorphism $\psi_p: D \rightarrow D_p$ to construct a $D$-module
$[N] := \{ [n] \ | \ n \in N \}$
by defining $G \cdot [n] := [\psi_p(G) \cdot n]$ for any $G \in D.$

The minimal operator of $[1] \in [ (D/DL)\pda^{1}_{p} ]$ is $L^{(p)}$. We assumed $\ord(L^{(p)}) = n$, which implies $D/DL^{(p)} \cong [ (D/DL)\pda^{1}_{p} ]$,
because otherwise the algorithm stops in Step 1(b).
So
\[ D/DL^{(p)} \, \cong \,  [(D/DL)\pda^{1}_{p} ] \ \cong \  \bigoplus_{i=0}^{p-1} [D_p/D_p\tilde{L}_i] \ \cong \   \bigoplus_{i=0}^{p-1} \left( D/DL_i \right)  \]
where $L_i := \psi_p^{-1}(\tilde{L}_i)$ is irreducible in $D$ (because $\tilde{L}$ and $\tilde{L}_i$ are irreducible in $D_p$, and even irreducible in $D$).
This shows that if $R$ is any factor of $L^{(p)}$ of order $n/p$, then $D/DR \cong D/D L_i$ and $\det(R) \sim \det(L_i)$ for some $i$.
Then $\psi_p( \det(R)) \sim \det( \tilde{L}_i) \sim  \det( \tilde{L} )  \sim \det(L)$.

\begin{remark} (continuation of Remark~\ref{RemCombination})
As mentioned in Remark~\ref{RemCombination}, our factoring implementation constructs not only candidates for $\det(R)$ up to $\sim$,
but it combines them with candidates for the Newton polygon and polynomials for $R$. 
But we know the latter data ahead of time.
This is because gauge equivalent operators $L$ and $\tilde{L}$
have the same Newton polygon and polynomials, and $\psi_p^{-1}(\tilde{L})$ gives this data for $R$.
Combinations with the wrong Newton polygon or polynomials can thus be discarded,
which further reduces the number of cases from 363 to 121, and the CPU time from 6.9 to 3.2 seconds.
\end{remark}

In total we have a reduction from 24.7 to 3.2 seconds
for the absolute factorization of $L$ from Example~\ref{ExampleA}.
The improvement coming from Theorem~\ref{TheoremEfficiency} will be smaller for examples where $a_0,a_n$ have fewer factors, because if there are fewer combinations then there is less room for improvement.
See \cite{algo} for more examples.

\section{Solving equations of order 3 and 4 in terms of order 2}

\subsection{Lists of cases}
If $L \in D$ have order 3, then \cite{kaedbey2025solving} proved that $L$ is $2$-solvable when one of the following cases holds:
\begin{enumerate}
\item Reducible case: $L$ is reducible in $D$.
\item Liouvillian case: $L$ is gauge equivalent (Definition~\ref{def:gauge}) to $\tau^{3}+a$ for some $a\in \mathbb{C}(x)$. 
\item Symmetric square case: $L$ is gauge equivalent to $L_{2}^{\cs 2}\cs L_{1}$ for some $L_1, L_2 \in D$ of orders 1 and 2.
\end{enumerate}
Algorithms for these cases are given in \cite[Section~7]{barkatou2024hypergeometric}, \cite[Section 3.1]{bou2024solving}, and \cite[Section 5.2]{bou2024solving} respectively.
\begin{remark} \label{RemOnly}

The condition in case~(3) above only determines $L_2$ up to projective equivalence. {(The proof is similar to the differential case, \cite[Theorem 4.7]{van2006descent}).} So $L_2$, computed with algorithm ReduceOrder~\cite{bou2024solving}, is not unique, it is only unique up to projective equivalence.
In fact, the size of this $L_2$ can vary significantly. To improve this, our website \cite{algo} contains code to find, given $L_2$ as input, a projectively equivalent operator of near-optimal size.
\end{remark}
For $L$ of order 4 we have the following 2-solvable cases:
\begin{enumerate}
    \item $L$ is reducible (if a right-factor has order~3 then apply~\cite{bou2024solving}).
    \item $L$ is irreducible but not absolutely irreducible.
    \item $L^{(2)}$ is gauge equivalent to $L_{2a} \cs L_{2b}$ for some $L_{2a}, L_{2b} \in D$ of order 2. 
    \item $L$ is projective equivalent to $L_2^{ \cs 3} \cs L_1$ for some $L_1,L_2 \in D$ of orders 1 and 2.
\end{enumerate}

The proof that this list for order~4 is complete will be in the first author's PhD thesis, using tools from \cite{kaedbey2025solving}. See~\cite{barkatou2024hypergeometric} for the algorithm of case~(1)
and see Section~\ref{SectionAbsFactor} for the algorithm of case~(2).
Here the goal is to give algorithms to find 2-expressible solutions for the remaining cases $(3)$ and $(4)$
with improvements similar to Section~\ref{SectionImproved}.

\subsection{Case (3): symmetric product}

If an irreducible operator $L$ of order 4 is gauge equivalent to a symmetric product of second order operators,
then $\bigwedge^2(L)$ has two right-factors of order three, both 2-solvable {(see \Cref{proof:}, or \cite[Section 4.4]{person2002solving} for the differential case)}. 

We divide Case (3) into two sub-cases: Case 3(a) is when $L$ itself is gauge equivalent to a symmetric product, and Case 3(b) is when $L^{(2)}$, but not $L$, is gauge equivalent to a symmetric product. Theorem~\ref{TheoremEfficiency} provides a significant efficiency improvement for Case~3(b).
The implementation can be found at \cite{algo}. \\

\noindent \textbf{Algorithm:} \texttt{Case 3(a)} \\
\textbf{Input:} An irreducible order 4 operator $L$.\\
\textbf{Output:} If the exterior square of $L$ does not have order 3 right factors then return FAIL, otherwise return two order 2 operators $L_{2a} \text{ and } L_{2b}$ and
a gauge transformation $G: V(L_{2a}\circledS L_{2b}) \to V(L)$ (see Remark~\ref{RemG} and \Cref{def:gauge} for the definition of gauge transformation).

\begin{enumerate}
    \item Let $L_6 := \bigwedge^2(L)$
    (see Section~\ref{Section22}).

    \item Let $S = \RightFactors(L_6,3)$. If $S = \emptyset$ then return FAIL. \\
    Otherwise $S$ should have two elements, say $L_{3a}$ and $L_{3b}$.

    \item Apply the ReduceOrder algorithm \cite{bou2024solving} to $L_{3a}$ and $L_{3b}$ to obtain $L_{2a}$ and $L_{2b}$ of order~2. If this fails then return FAIL.

    \item $L_s:=L_{2a} \cs L_{2b}$.

    \item \label{step5} Find an operator $G$ that maps $V(L_s)$ to $V(L)$, see Remark~\ref{RemG}.

    \item Return $L_{2a},L_{2b}$ and $G$.
\end{enumerate} 

\begin{remark} \label{RemG} 
As mentioned in Remark~\ref{RemOnly}, the operators $L_{2a},L_{2b}$ are only unique up to projective equivalence. So we are free to replace say $L_{2a}$ by $L_{2a} \cs (\tau - r)$
for any non-zero $r$, which would replace $L_s$ by $L_s \cs (\tau - r)$. This would change $\det(L_s)$ by a factor $r^4$.
This $r$ needs to be chosen in the right way, because a necessary condition
for a gauge transformation $G$ from $L_s$ to $L$ to exist is that $\det(L_s) \sim \det(L)$.
This condition determines $r$ up to $\sim$ and up to a factor in $\{1, i, -1, -i\}$. For each of these 4 cases (2 cases if we restrict to working over $\mathbb{Q}$) we have to compute a gauge transformation, e.g. with \cite{Imp}. Another option is to use an implementation for projective equivalence called
\texttt{ProjectiveHom}, which finds both $r$ and the gauge transformation \cite{algo}.
\end{remark}

\noindent \textbf{Algorithm:} \texttt{Case 3(b)} \\
\textbf{Input:} An irreducible order 4 operator $L$ that is not gauge equivalent to a symmetric square. \\
\textbf{Output:} Same as {Case 3(a)} except with $L$ replaced with $L^{(2)}$. \\[5pt]

\noindent {\bf Inefficient version:}
\begin{enumerate}
\item Simply apply algorithm Case 3(a) to $L^{(2)}$.
\end{enumerate}
{\bf Efficient version:}   
\begin{enumerate}
\item Let $L_6 := \bigwedge^2(L)$.
\item Use the efficiency improvement in Section~\ref{SectionImproved} to compute order~3 right-factors $L_{3a},L_{3b}$ of $L_6^{(2)}$ (FAIL if not found).
\item Apply ReduceOrder to $L_{3a}$ to find $L_{2a}$  (FAIL if not found).\\
Substitute $x \mapsto x + 1/2$ in $L_{2a}$ to find $L_{2b}$. \\
(The proof of Theorem~\ref{TheoremEfficiency} shows that
this substitution swaps $L_{3a}$ and $L_{3b}$ up to gauge-equivalence).
\item Now proceed as in Case 3(a).
\end{enumerate}

\begin{example} (OEIS A227845) (also used in \cite{kaedbey2025solving}).
Let $$a(n) = \sum_{k=0}^{ [n/2] } \sum_{j=k}^{n-k} \binom{n-k}{j}^2 \binom{j}{k}^2$$
with $L = (x + 4)^2\tau^4 - 2(3x^2 + 21x + 37)\tau^3 + 2(3x^2 + 15x + 19)\tau - (x + 2)^2$.

With algorithm Case~3(b) we found that
$L^{(2)}$ is not just gauge equivalent, but actually equal to $L_{2a} \cs L_{2b}$ for some $L_{2a}$ and $L_{2b}$ where $L_{2b}$ is a $x \mapsto x + 1/2$ shift from $L_{2a}$.
We can write $a(2n)$ as a product of their solutions.

As for $a(n)$ itself, we listed the formula $a(n) = U(n) \cdot U(n-1)$ on the OEIS, where $U(-1),U(1),U(3),\ldots$ and $U(0),U(2), U(4), \ldots$ satisfy the recurrence
$n^2 U(n) = 2(3n^2-3n+1)U(n-2) - (n-1)^2U(n-4)$
obtained by applying the isomorphism $\psi_2: D \rightarrow D_2$ to $L_{2b}$.
\end{example}

The examples here give only summaries of the computations (omitting large expressions such as gauge transformations). Detailed computations are provided online \cite[ReduceOrder.mw]{algo}.

\begin{example} (OEIS A247365) Let

$L =(16x^6 + 96x^5 + 237x^4 + 307x^3 + 222x^2 + 88x + 15)\tau^4 + (x + 1)(64x^8 + 800x^7 + 4244x^6 + 12430x^5 + 21920x^4 + 23837x^3 + 15726x^2 + 5872x + 972)\tau^3 +  (-256x^{10} - 3840x^9 - 25472x^8 - 98304x^7 - 244271x^6 - 408233x^5 -  464965x^4 - 357285x^3 - 178432x^2 - 53022x - 7290)\tau^2 + (-64x^9 - 800x^8 -  4244x^7 - 12422x^6 - 21868x^5 - 23719x^4 - 15610x^3 - 5847x^2 - 1010x - 6)\tau + 16x^6 + 192x^5 + 957x^4 + 2535x^3 + 3765x^2 + 2977x + 981$ .

Algorithm Case~3(a) finds two operators, projectively equivalent to $\tau^2 + \tau x - 1$ and $(2x - 1)\tau^2 + 2(4x^2 + 1)x\tau + 2x + 1$, which can be solved in terms
of Bessel sequences using the algorithm from \cite{Cha}.  This led to the following formula
$$a(n) = \left[2 \cdot \BesselI(n-1,2) \cdot \BesselK(2n-1,2)\right]$$
where $[$ \ $]$ rounds to the nearest integer.
\end{example}

\subsection{Case 4: Symmetric cube}
If an irreducible fourth order operator $L$ is projective equivalent to a symmetric cube, 
then like in~\cite{person2002solving}, either $L^{\cs 2}$ has order~7, or it has order~10 with factors of orders 3 and 7. 
The implementation is at \cite{algo}. \\

\noindent \textbf{Algorithm:} \texttt{Case 4} \\
\textbf{Input:} An irreducible order 4 operator $L$.\\
\textbf{Output:} FAIL if $L$ is not projectively equivalent to $L_2^{\cs 3}$ for some $L_2$ of order 2, else $L_2$ and a projective transformation. 

\begin{enumerate}
    \item Find $L^{\cs 2}$. 

    \item If $\ord(L^{\cs 2})=7$, use algorithm \texttt{SpecialCase} below.

    \item Let $L_3 = \RightFactors(L^{\cs 2}, 3)$. If no right factor was found then return FAIL.
    
    \item Apply ReduceOrder to $L_3$ obtain an order 2 operator $L_2$.

    \item $L_s:=L_2^{\cs 3}$.

    \item Find a projective transformation like in Remark~\ref{RemG}.

\end{enumerate}

The factorization in Step~3 in algorithm \texttt{Case~4} typically involves a huge number of candidate types, which makes
the algorithm very slow. A large speedup is obtained
if we tell $\RightFactors$
which candidates types to discard with the following lemma. We implemented~\cite{algo} an additional speedup by discarding candidate Newton polygon/polynomials that are incompatible with projective equivalence to a symmetric square.

\begin{lemma}

Any candidate type
in the factorization in Step~3 in algorithm \texttt{Case~4}
whose square is not $\sim \det(L)^3$ can be dicarded.     
\end{lemma}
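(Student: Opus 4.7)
\medskip
\noindent \textit{Proof proposal.} The plan is to unpack projective equivalence at the level of $D$-modules, identify the $3$-dimensional summand of $\Sym^2(D/DL)$ that carries the sought-after factor $L_3$, and compute its determinant from the formulas in Section~\ref{Section21}. Suppose the candidate factor $L_3$ arises from a genuine projective equivalence between $L$ and $L_2^{\cs 3}$ for some $L_2$ of order $2$; only such $L_3$ can lead to a valid output of Step~4, so the content of the lemma is to give a necessary condition on their determinants. By Definition~\ref{DefProj} there is a $1$-dimensional $D$-module $N$ with $D/DL \cong \Sym^3(M_2) \otimes N$, where $M_2 := D/DL_2$; set $\mu := \det(M_2)$ and $\nu := \det(N)$.

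The main step is the plethystic decomposition
\[\Sym^2(\Sym^3(V)) \;\cong\; \Sym^6(V) \,\oplus\, \Sym^2(V) \otimes (\det V)^{\otimes 2},\]
valid for any $2$-dimensional module $V$. Over $\mathbb{C}$ this is the classical $\mathrm{GL}_2$ identity, verified on characters of $\operatorname{diag}(a,b)$. I would transport it into the $D$-module setting by realizing the two projectors as Young symmetrizers in $\End(V^{\otimes 6})$: these are natural in $V$ and therefore commute with the diagonal $\tau$-action, making the splitting $D$-equivariant. Applying this with $V = M_2$ and tensoring with $N^{\otimes 2}$ (using $\Sym^2(W \otimes N) \cong \Sym^2(W) \otimes N^{\otimes 2}$ for $1$-dimensional $N$) gives
\[\Sym^2(D/DL) \;\cong\; \bigl(\Sym^6(M_2) \otimes N^{\otimes 2}\bigr) \,\oplus\, \bigl(\Sym^2(M_2) \otimes \mu^{\otimes 2} \otimes N^{\otimes 2}\bigr),\]
and the $3$-dimensional right-hand summand is exactly the submodule whose minimal operator at a cyclic vector is the $L_3$ we want.

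With that summand identified, the determinant identity is bookkeeping. Lemma~\ref{lemmaDet} combined with~(\ref{eqDetSym}) gives $\det(\Sym^2(M_2)) \sim \mu^3$ and $\det(\Sym^3(M_2)) \sim \mu^6$; Lemma~\ref{det_tensor} then yields $\det(L) \sim \mu^6 \nu^4$ and $\det(L_3) \sim \mu^3 \cdot (\mu^2 \nu^2)^3 = \mu^9 \nu^6$. Squaring and cubing respectively gives $\det(L_3)^2 \sim \mu^{18} \nu^{12} \sim \det(L)^3$, so any candidate type $\lambda$ with $\lambda^2 \not\sim \det(L)^3$ cannot be the determinant of a useful $L_3$ and may be discarded.

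The step I expect to be the real work is justifying that the plethystic splitting is $\mathbb{C}(x)[\tau]$-equivariant, not merely $\mathbb{C}(x)$-linear. The Young-symmetrizer route should suffice, because those symmetrizers are $\mathbb{Q}$-rational projectors on $V^{\otimes 6}$ and tensor powers of $D$-modules carry the diagonal $D$-action by definition, so the symmetrizers commute with $\tau$. Once that is in place, everything after is formal manipulation with Lemmas~\ref{lemmaDet} and~\ref{det_tensor}.
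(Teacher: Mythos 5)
Your proposal is correct and takes essentially the same route as the paper: both identify $D/DL \cong \Sym^3(M_2)\otimes M_1$, isolate the $3$-dimensional piece of its symmetric square complementary to $\Sym^6(M_2)\otimes \Sym^2(M_1)$, and grind out the determinants via Lemma~\ref{lemmaDet} and Lemma~\ref{det_tensor}, arriving at the same exponents ($\det(L)\sim \mu^6\nu^4$, $\det(L_3)\sim \mu^9\nu^6$, hence $\det(L_3)^2\sim\det(L)^3$). The one point worth noting is that the step you flag as ``the real work'' --- the $D$-equivariance of the plethystic splitting --- can be bypassed entirely: the paper uses only the (manifestly equivariant) surjection $\Sym^2(M)\twoheadrightarrow \Sym^6(M_2)\otimes\Sym^2(M_1)$ and computes the determinant of its kernel as $\det(M_{10})/\det(M_7)$, which needs no direct-sum decomposition.
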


\begin{proof}
Given $L$, the algorithm decides if $D/DL \cong \Sym^3(M_2) \otimes M_1$ for some modules $M_1$ and $M_2$ of dimension 1 and 2.
Let $d_1 := \det(M_1)$, $d_2 := \det(M_2)$, and $M := \Sym^3(M_2) \otimes M_1$. Then $\det(L) \sim \det(M) = d_1^4 d_2^6$, see \Cref{det_tensor}.
So if $\det(L)$ is not $\sim$ to a square, then the algorithm can give up.
Now $M_{10} := \Sym^2(M)$ ($\cong D/DL^{\cs 2}$ if this has order 10) has a surjective homomorphism to
$M_7 := \Sym^6(M_2) \otimes \Sym^2(M_1)$.

So its kernel, the 3-dimensional module $M_3$ that \texttt{RightFactors} should
find, has $\det(M_3) \sim \det(M_{10})/\det(M_7) = \det(M)^5 / (d_1^{14} d_2^{21}) = d_1^6 d_2^9$. The square of this is $\sim \det(L)^3$.  
\end{proof}

\noindent \textbf{Algorithm:} \texttt{SpecialCase}\\
\textbf{Input:} An irreducible order 4 operator for which $L_4^{\cs 2}$ has order 7. \\
\textbf{Output:} Same as algorithm \texttt{Case 4}. \\
\textbf{Explanation:}  Let $b_1,\ldots,b_4 := \tau^0,\ldots,\tau^3$ be the standard basis of $M := D/DL$,
then $B_2 := \{b_1^2,b_1 b_2,\ldots, b_4^2\}$ from Section~\ref{Section21} is a basis
of $M_{10} := \Sym^2(M)$, and  $L^{\cs 2}$ from Section~\ref{Section22} is the minimal operator of $b_1^2$.
If this has order~7 then $D/DL^{\cs 2} \cong M_7$ with $M_7$ as above. To obtain $L_3$
we have to compute a cyclic vector of $M_{10}/M_7$.  The computation of $L^{\cs 2}$ already involved computing a basis of $M_7$.
\begin{enumerate}
\item Pick any $b \in B_2$ with $b \not\in M_7$.
\item Compute $b, \tau(b), \tau^2(b), \tau^3(b) \in M_{10}$.
\item Solve linear equations to find $L_3$ with $L_3(b) \equiv 0$ mod $M_7$.
\item Proceed as in algorithm \texttt{Case 4}.
\end{enumerate}

\begin{example} (OEIS A219670). Let

$  L =  (x + 3)^2(x + 4)^2(x + 5)(2x + 3)(7x^4 + 56x^3 + 166x^2 + 216x + 105)\tau^4  - (x + 3)(x + 4)(2x + 3)(2x + 7)(70x^6 + 1050x^5 + 6406x^4 + 20337x^3 + 35449x^2 + 32244x + 12048)\tau^3  - 3(x + 3)(2x + 5)(490x^8 + 9800x^7 + 84910x^6 + 416150x^5 +  1261159x^4 + 2417840x^3 + 2860095x^2 + 1905600x + 546588)\tau^2 + 27(x + 2)^2(2x + 3)(2x + 7)(70x^6 + 1050x^5 + 6406x^4 + 20283x^3 + 35044x^2 + 31221x + 11178)\tau + 729(x + 1)^3(x + 2)^2(2x + 7)(7x^4 + 84x^3 + 376x^2 + 744x + 550).$

Our algorithm finds that $L$ is projectively equivalent to the symmetric cube of $\tau^2+(2x + 1)\tau - 3x^2$, based on the worksheet in \cite{algo}. After solving this in terms of OEIS sequences with \cite{Giles} and some simplification, we obtained the following formula
$$a(n) = A002426(n)^2A005717(n).$$
(Note:  A005717 is gauge equivalent to A002426;  $L$ is projective equivalent but not equal to a symmetric cube).
\end{example}

For more details on these examples, such as timings, the number of combinations the factorizer checks (before or after the efficiency improvements in this paper), or other examples, see \cite{algo}.

\appendix

\section{{Exterior Square of a Tensor Product}}
\label{proof:}

Let $M$, $N$ be $D$-modules with bases $\{m_1,m_2\}$ and $\{n_1,n_2\}$.
Then \(M\otimes N\) has this basis
\[
w_1=m_1\!\otimes\!n_1,\quad
w_2=m_1\!\otimes\!n_2,\quad
w_3=m_2\!\otimes\!n_1,\quad
w_4=m_2\!\otimes\!n_2
\]
and \(\wedge^2(M \otimes N)\) has basis $e_1,\ldots,e_6=$
\[w_1 \wedge w_2, \ w_1 \wedge w_3, \ w_1 \wedge w_4, \ w_2 \wedge w_3, \ w_2 \wedge w_4, \ w_3 \wedge w_4\]

Let $d_m = m_1 \wedge m_2$ resp. $d_n = n_1 \wedge n_2$ be a basis of 
$\wedge^2 M$ resp. $\wedge^2 N$.

\bigskip
\noindent
A classical decomposition gives
\[
%\boxed
{\wedge^2(M \otimes N) \;\cong\;
   (\operatorname{Sym}^2M \otimes \wedge^2N)
   \;\oplus\;
   (\wedge^2M \otimes \operatorname{Sym}^2N)
}.
% \tag{$\star$}
\]
The isomorphism is given by
\[ e_1 \mapsto m_1^2 \otimes d_n, \quad (e_3-e_4)/2 \mapsto m_1m_2 \otimes d_n,  \quad e_6 \mapsto  m_2^2 \otimes d_n\]
\[ \mbox{} \ e_2 \mapsto d_m \otimes n_1^2, \quad  (e_3+e_4)/2 \mapsto d_m \otimes n_1n_2, \quad \ e_5 \mapsto  d_m \otimes n_2^2.\]
To verify this, observe that $\tau$ acts on
$\{e_1,(e_3-e_4)/2,e_6,e_2,(e_3+e_4)/2,e_5\}$ as it acts on $\{m_1^2 \otimes d_n,\ldots,d_m \otimes n_2^2\}$, this action is given by
\[
\begin{pmatrix}
\det(B) \rho_3(A) & 0_{3\times3} \\
0_{3\times3} & \det(A) \rho_3(B)
\end{pmatrix}.
\]
Here $A,B$ express the action of $\tau$ on $\{m_1,m_2\}$ and $\{n_1,n_2\}$, while $\rho_3(A),\rho_3(B)$ express the action on $\{m_1^2, m_1m_2, m_2^2\}$ and $\{n_1^2, n_1n_2, n_2^2\}$.
The $3 \times 3$ matrix $\rho_3(A)$ is also found in the proof of 
\cite[Lemma 3.2]{singer1985solving}.

\bibliographystyle{plain}

\begin{thebibliography}{10}

\bibitem{singer1996testing}Singer, M.F. (1996). Testing reducibility of linear differential operators: a group theoretic perspective. \emph{Applicable Algebra in Engineering, Communication and Computing}, 7.2, 77-104.

\bibitem{person2002solving} Person, A.C. (2002). \emph{Solving homogeneous linear differential equations of order 4 in terms of equations of smaller order}, Ph.D. dissertation, North Carolina State University.

\bibitem{van2006galois}Van der Put, M., and Singer, M.F. (2006). \emph{Galois Theory of Difference Equations}. Springer.

\bibitem{Imp}van van Hoeij, M., and Cha, Y. (2012). \emph{Implementation for Hom}. Florida State University. Available at \url{https://www.math.fsu.edu/~hoeij/papers/Hom/index.html}.

\bibitem{Giles} Levy, G. (2010). \emph{Solutions of Second Order Recurrence
Relations}, Ph.D. dissertation, Florida State University.

\bibitem{ChaThesis} Cha, Y. (2011)
\emph{Closed Form Solutions of Linear Difference Equations},
Ph.D. dissertation, Florida State University. 

\bibitem{Cha} Cha, Y., van Hoeij, M., and Levy, G. (2010)
Solving Recurrence Relations using Local Invariants.
\emph{Proceedings of the 2010 International Symposium on Symbolic and Algebraic Computation (ISSAC ’10)}, p. 303--310.

\bibitem{barkatou2024hypergeometric}Barkatou, M., van Hoeij, M., Middeke, J., and  Zhou, Y. (2025) Hypergeometric Solutions of Linear Difference Systems, \emph{Journal of Symbolic Computation}, 102475. 

\bibitem{kaedbey2025solving} Bou~KaedBey, H., van Hoeij, M.,  and Tsui, M.C. (2025) Solving order 3 difference equations. \emph{Journal of Symbolic Computation}, 102419.

\bibitem{bou2024solving} Bou~KaedBey, H., van Hoeij, M. and Tsui, M.C. (2024). Solving Third Order Linear Difference Equations in Terms of Second Order Equations. \emph{Proceedings of the 2024 International Symposium on Symbolic and Algebraic Computation (ISSAC '24)}. Association for Computing Machinery, New York, NY, USA, 457–463. \url{https://doi.org/10.1145/3666000.3669719}.

\bibitem{hendricks1999solving} Hendriks, P., and Singer, M.F. (1999) Solving difference equations in finite terms. {\em Journal Of Symbolic Computation}. \textbf{27}, 239-259.

\bibitem{petkovvsek1997wilf}Petkovšek, M., Wilf, H. \& Zeilberger, D. (1996) \emph{A=B}. CRC Press.

\bibitem{hessinger2001computing} Hessinger, S.A. (2001). Computing the Galois group of a linear differential equation of order four. \emph{Applicable Algebra in Engineering, Communication and Computing} 11: 489-536.

\bibitem{algo} Bou KaedBey, H., and van Hoeij, M. (2024) \emph{Implementation and examples}. Available at: \url{https://www.math.fsu.edu/~hboukaed/Implementations}, \url{https://www.math.fsu.edu/~hoeij/AbsFactor}.

\bibitem{zhou2022algorithms}Zhou, Y. (2022). \emph{Algorithms for Factoring Linear Recurrence Operators}, Ph.D.
dissertation, Florida State University.

\bibitem{aldossari2020algorithms} Aldossari, S. (2020). \emph{Algorithms for simplifying differential equations}. Ph.D.
dissertation, Florida State University.

\bibitem{Rfactorimp} van Hoeij, M. \emph{Implementations}. Available at: \url{https://www.math.fsu.edu/~hoeij/algorithms/RFactors/}.

\bibitem{fang20112} Fang, T., \& van Hoeij, M. (2011, June). 2-descent for second order linear differential equations.\emph{ Proceedings of the 36th international symposium on Symbolic and Algebraic Computation }(pp. 107-114).

\bibitem{van2010finding} Van Hoeij, M., \& Yuan, Q. (2010, July). Finding all Bessel type solutions for linear differential equations with rational function coefficients.\emph{Proceedings of the 2010 International Symposium on Symbolic and Algebraic Computation} (pp. 37-44).


\bibitem{debeerst2008solving} Debeerst, R., van Hoeij, M., \& Koepf, W. (2008, July). Solving differential equations in terms of Bessel functions. \emph{Proceedings of the twenty-first international symposium on Symbolic and Algebraic Computation} (pp. 39-46).

\bibitem{van1997factorization} Van Hoeij, M. (1997). Factorization of differential operators with rational functions coefficients. \emph{Journal of Symbolic Computation}, 24(5), 537-561.

\bibitem{singer1985solving}
Singer, M. F. (1985). Solving homogeneous linear differential equations in terms of second order linear differential equations. \emph{American Journal of Mathematics}, 107(3), 663-696.

\bibitem{van2007solving} van Hoeij, M. (2007, July). Solving third order linear differential equations in terms of second order equations. \emph{Proceedings of the 2007 international symposium on Symbolic and Algebraic Computation} (pp. 355-360).

\bibitem{van2002decomposing} Van Hoeij, M. (2002). Decomposing a 4th order linear differential equation as a symmetric product. \emph{Banach Center Publications}, 58, 89-96.

\bibitem{petkovvsek1992hypergeometric}Petkovšek, M. (1992). Hypergeometric solutions of linear recurrences with polynomial coefficients. \emph{Journal of Symbolic Computation}, 14(2-3), 243-264.

\bibitem{hendriks1998algorithm}Hendriks, P. A. (1998). An algorithm determining the difference Galois group of second order linear difference equations. \emph{Journal of Symbolic Computation}, 26(4), 445-462.

\bibitem{zehfuss1858ueber} Zehfuss, G. (1858). Ueber eine gewisse Determinante. Zeitschrift für Mathematik und Physik, 3(298-301), 1.

\bibitem{van2006descent} van Hoeij, M., \& van der Put, M. (2006). Descent for differential modules and skew fields. Journal of Algebra, 296(1), 18-55.


\end{thebibliography}
 \newcommand{\noop}[1]{}

\end{document}